\newtheorem{theorem}{Theorem}
\newtheorem{corollary}[theorem]{Corollary}
\def\0{\leqno}
\title{Finite groups with a certain number\\ of cyclic subgroups}
\author{Marius T\u arn\u auceanu}
\date{February 17, 2015}
\begin{document}
\maketitle

\begin{abstract}
    In this short note, we describe the finite groups $G$ having $|G|-1$ cyclic subgroups.
    This leads to a nice characterization of the sy\-mme\-tric group $S_3$.
\end{abstract}

In subgroup lattice theory, it is a usual technique to associate
to a finite group $G$ some posets of subgroups of $G$ (see e.g.
\cite{4}). One such poset is the poset of cyclic subgroups of $G$,
usually denoted by $C(G)$. Notice that there are few papers
on the connections between $|C(G)|$ and $|G|$ (\cite{2,3} are examples). We
also recall the following basic result of group theory.

\begin{theorem}
    A finite group $G$ is an elementary abelian $2$-group if and only if $|C(G)|=|G|$.
\end{theorem}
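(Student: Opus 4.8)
The plan is to reduce both implications to a single counting identity. Every element $g \in G$ generates a unique cyclic subgroup $\langle g \rangle \in C(G)$, and conversely a cyclic group of order $n$ has exactly $\varphi(n)$ generators, where $\varphi$ denotes Euler's totient function. Partitioning $G$ according to the cyclic subgroup each of its elements generates therefore yields the fundamental relation
$$|G| = \sum_{H \in C(G)} \varphi(|H|).$$
This identity is the engine of the whole argument, so I would establish it first and carefully, noting that the trivial subgroup $\{1\}$ is counted (it contributes the term $\varphi(1)=1$, matching the identity element).

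For the forward implication, suppose $G$ is an elementary abelian $2$-group. Then every non-identity element has order $2$, so apart from $\{1\}$ every cyclic subgroup has order $2$ and contains exactly one non-identity element. Hence the cyclic subgroups of order $2$ are in bijection with the $|G|-1$ involutions, and adding the trivial subgroup gives $|C(G)| = (|G|-1)+1 = |G|$. Equivalently, every term in the displayed sum equals $1$.

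For the converse, assume $|C(G)| = |G|$. Since $\varphi(|H|) \geq 1$ for every $H$, the sum above has $|C(G)| = |G|$ terms, each at least $1$, adding up to $|G|$. Hence every term must equal exactly $1$, that is, $\varphi(|H|) = 1$ for all $H \in C(G)$. As $\varphi(n) = 1$ precisely for $n \in \{1,2\}$, every cyclic subgroup---and therefore every element---of $G$ has order $1$ or $2$.

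It remains to upgrade the condition ``$g^2 = 1$ for all $g \in G$'' to ``elementary abelian''. The one genuinely non-counting step is showing that $G$ is abelian: for any $x,y \in G$, the relation $(xy)^2 = 1$ together with $x^2 = y^2 = 1$ gives $xy = x^{-1}y^{-1} = (yx)^{-1} = yx$, so $G$ is commutative. An abelian group of exponent $2$ is an elementary abelian $2$-group, which completes the proof. I expect the main (indeed, essentially the only) obstacle to be stating and justifying the counting identity with full rigor; once it is in hand, everything downstream is a short deduction.
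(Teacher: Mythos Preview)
Your proof is correct and follows essentially the same route the paper indicates: the paper does not prove Theorem~1 directly but remarks that it follows from the counting identity $\sum_i n_i\phi(d_i)=|G|$ established at the start of the proof of Theorem~2, which is exactly your identity $|G|=\sum_{H\in C(G)}\phi(|H|)$ regrouped by divisor. Your deduction that $|C(G)|=|G|$ forces every summand to equal $1$, hence $\phi(|H|)=1$ for all $H$, is precisely the specialization of the paper's argument to the case $|C(G)|=|G|$ (yielding $\sum_i n_i(\phi(d_i)-1)=0$), and you have additionally supplied the routine steps---the forward implication and the ``exponent $2$ implies abelian'' argument---that the paper leaves implicit.
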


Inspired by Theorem 1, we study here the finite groups
$G$ for which $$|C(G)|=|G|-1.\0(*)$$First, we observe that
certain finite groups of small orders, such as $\mathbb{Z}_3$,
$\mathbb{Z}_4$, $S_3$ and $D_8$, have this property. Our main
theorem proves that in fact these groups exhaust all finite groups
$G$ satisfying $(*)$.

\begin{theorem}
    Let $G$ be a finite group. Then $|C(G)|=|G|-1$ if and only if
    $G$ is one of the following groups: $\mathbb{Z}_3$,
    $\mathbb{Z}_4$, $S_3$ or $D_8$.
\end{theorem}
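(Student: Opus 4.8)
The plan is to exploit the elementary counting identity
\[
|G| = \sum_{H \in C(G)} \varphi(|H|),
\]
which holds because $g \mapsto \langle g\rangle$ maps $G$ onto $C(G)$ and each cyclic subgroup $H$ has exactly $\varphi(|H|)$ generators. Since $|C(G)| = \sum_{H \in C(G)} 1$, subtracting turns condition $(*)$ into
\[
\sum_{H \in C(G)} \bigl(\varphi(|H|) - 1\bigr) = |G| - |C(G)| = 1 .
\]
Every summand is a nonnegative integer, so exactly one cyclic subgroup $H_0$ satisfies $\varphi(|H_0|) = 2$ while every other cyclic subgroup of $G$ has order $1$ or $2$. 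Thus $|H_0| \in \{3,4,6\}$; the value $6$ is impossible, because a cyclic group of order $6$ has a distinct subgroup of order $3$, which would be a \emph{second} cyclic subgroup with $\varphi > 1$. Hence $|H_0| \in \{3,4\}$, and I will split the argument accordingly. In both cases $G$ has no element of order $\ge 5$ and no element of prime-power order other than those dividing $|H_0|$.

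First I treat $|H_0| = 3$. Then $H_0$ is the unique subgroup of order $3$, so $G$ has exactly two elements of order $3$ and no element of order $4$. By Cauchy the only primes dividing $|G|$ are $2$ and $3$, and a Sylow $3$-subgroup must have order exactly $3$: a group of order $9$ is either $\mathbb{Z}_9$ (excluded, no element of order $9$) or $\mathbb{Z}_3\times\mathbb{Z}_3$ (excluded, it has four subgroups of order $3$). So $H_0$ is a normal Sylow $3$-subgroup, the Sylow $2$-subgroup $Q$ is elementary abelian (no element of order $4$), and $G = H_0 \rtimes Q$. The action $Q \to \operatorname{Aut}(H_0)\cong\mathbb{Z}_2$ must be faithful, for a nontrivial element of $Q$ centralizing a generator of $H_0$ would yield an element of order $6$, contradicting uniqueness of $H_0$. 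Hence $|Q|\le 2$, giving $G\cong\mathbb{Z}_3$ or $G\cong S_3$.

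Next I treat $|H_0| = 4$, writing $H_0 = \langle a\rangle$. Now $G$ has no element of odd prime order and no element of order $8$, so it is a $2$-group of exponent $4$ whose only elements of order $4$ are $a$ and $a^{-1}$. Conjugation permutes these two elements, so $H_0 \trianglelefteq G$ and $[G : C_G(a)] \le |\operatorname{Aut}(\mathbb{Z}_4)| = 2$. The crucial step, and the one I expect to be the main obstacle, is to pin down $C_G(a)$. If an involution $b \notin H_0$ commuted with $a$, then $\langle a,b\rangle \cong \mathbb{Z}_4\times\mathbb{Z}_2$ would contain four elements of order $4$, a contradiction; hence the only involution in $C_G(a)$ is $a^2$. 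Since moreover the only order-$4$ elements of $G$ are $a,a^{-1}\in H_0$, every element of $C_G(a)$ lies in $H_0$, so $C_G(a)=H_0$ and $|G|\le 8$.

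It then remains to inspect the groups of order $4$ and $8$ having exactly two elements of order $4$: this rules out $\mathbb{Z}_2\times\mathbb{Z}_2$ and $\mathbb{Z}_2^3$ (elementary abelian, excluded already by Theorem~1), $\mathbb{Z}_8$ (an element of order $8$), $\mathbb{Z}_4\times\mathbb{Z}_2$ and $Q_8$ (too many cyclic subgroups of order $4$), and leaves precisely $\mathbb{Z}_4$ and $D_8$. Combining the two cases produces the four groups $\mathbb{Z}_3,\mathbb{Z}_4,S_3,D_8$, and a direct verification that each satisfies $(*)$ finishes the proof.
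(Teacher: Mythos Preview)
Your proof is correct and follows essentially the same route as the paper's: the identity $|G|=\sum_{H\in C(G)}\varphi(|H|)$ is subtracted from $|C(G)|$ to isolate a unique cyclic subgroup $H_0$ of order $3$ or $4$, and each case is finished via the conjugation action on $H_0$ together with the bound $[G:C_G(H_0)]\le|\operatorname{Aut}(H_0)|=2$. The only cosmetic difference is that in the $|H_0|=4$ case the paper concludes $G=H_0\langle g\rangle\cong D_8$ directly, whereas you first bound $|G|\le 8$ and then enumerate; your enumeration is unnecessary (your own argument already gives $C_G(a)=H_0$ and forces any $g\notin H_0$ to be an involution inverting $a$), but it does no harm.
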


\begin{proof}
Assume that $G$ satisfies $(*)$, let $n=|G|$ and denote by
$d_1=1,d_2,...,d_k$ the positive divisors of $n$. If $n_i=|\{H\in C(G)\mid
|H|=d_i\}|$, $i=1,2,...,k$, then
$$\sum_{i=1}^k n_i\phi(d_i)=n.$$Since $|C(G)|=\sum_{i=1}^k
n_i=n-1$, one obtains
$$\sum_{i=1}^k n_i(\phi(d_i)-1)=1,$$which implies that:
\begin{itemize}
\item[-] there exists $i_0\in\{1,2,...,k\}$ such that $n_{i_0}=1$ and $\phi(d_{i_0})=2$ (i.e. $d_{i_0}\in\{3,4,6\}$);
\item[-] for an $i\neq i_0$, we have either $n_i=0$ or $\phi(d_i)=1$ (i.e. $d_i\in\{1,2\}$).
\end{itemize}

We remark that $d_{i_0}$ cannot be equal to 6 because in this case $G$ would also have a cyclic subgroup of order 3, a contradiction. We infer that $G$ contains a unique normal cyclic subgroup of order $d_{i_0}$, say $H$. Let $X$ be a subgroup of $G$ of prime order $p$. Then either $p=2$ or $p$ divides $d_{i_0}$. Hence by Cauchy's Theorem, either $d_{i_0}=4$ and $G$ is a $2$-group, or $d_{i_0}=3$ and $G$ is a $\{2,3\}$-group.

If $d_{i_0}=3$ it follows that $H$ is the unique Sylow $3$-subgroup of $G$, and consequently $G=HK$ for some $K\in Syl_2(G)$. The theorem holds if $G=H$, so we may assume $K\neq 1$. As $G$ has no cyclic subgroup of order $6$, we have  $C_K(H)=1$, so $|K|=|Aut(H)|=2$. Therefore $G$ is the nonabelian group $S_3$ of order $6$.

Assume next that $d_{i_0}=4$, so that $G$ is a $2$-group. If $G=H$, then the theorem holds, so
we may assume that there is $g\in G\setminus H$. Then, as each member of $G\setminus H$ is
an involution, $g$ is an involution inverting $H$ via conjugation. Since $|G:C_G(H)|\leq |Aut(H)|=2$, we
conclude that $G=H\langle g\rangle$ is the dihedral group of order 8. This completes the proof.
\end{proof}

The following corollary is an immediate consequence of Theorem 2.

\begin{corollary}
    $S_3$ is the unique finite group $G$ which is not a $p$-group
    and satisfies $|C(G)|=|G|-1$.
\end{corollary}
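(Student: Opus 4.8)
The plan is to deduce the corollary directly from Theorem 2, which already classifies all finite groups satisfying $(*)$. By that theorem, any finite group $G$ with $|C(G)|=|G|-1$ must be isomorphic to one of $\mathbb{Z}_3$, $\mathbb{Z}_4$, $S_3$ or $D_8$, so it suffices to decide which members of this finite list fail to be $p$-groups.

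First I would record the orders of the four candidate groups: $|\mathbb{Z}_3|=3$, $|\mathbb{Z}_4|=4$, $|S_3|=6$ and $|D_8|=8$. Three of these orders, namely $3$, $4=2^2$ and $8=2^3$, are prime powers, so $\mathbb{Z}_3$, $\mathbb{Z}_4$ and $D_8$ are $p$-groups (with $p=3$, $p=2$ and $p=2$ respectively). Only $|S_3|=6=2\cdot 3$ is divisible by two distinct primes, whence $S_3$ is the unique group in the list that is not a $p$-group.

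Combining these two observations yields the claim: a finite group $G$ that is not a $p$-group and satisfies $|C(G)|=|G|-1$ lies in the list furnished by Theorem 2, yet cannot coincide with any of the three $p$-groups appearing there, so $G\cong S_3$; conversely, $S_3$ both satisfies $(*)$ and is not a $p$-group. Since the entire force of the argument is inherited from Theorem 2, there is no genuine obstacle here — the only content is the elementary factorization of the four orders. The single point worth flagging is that one should confirm $S_3$ really does satisfy $(*)$, so that the characterization is not vacuous, but this is already guaranteed by the statement of Theorem 2.
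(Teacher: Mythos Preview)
Your argument is correct and matches the paper's approach exactly: the paper simply declares the corollary to be ``an immediate consequence of Theorem 2'' without further elaboration, which is precisely the inspection of the four-element list that you carry out. There is nothing to add.
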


We also remark that a facile proof of Theorem 1 easily follows from the first part of the proof of Theorem 2. Finally, we indicate a natural open problem concerning the above study.

\bigskip\noindent{\bf Open problem.} Describe the finite groups $G$ satisfying
$|C(G)|=|G|-r$, where $2\leq r\leq |G|-1$.
\bigskip

\bigskip\noindent{\bf Acknowledgements.} The author is grateful to the reviewer for
its remarks which improve the previous version of the paper.

\vspace*{5ex}\small

\hfill
\begin{minipage}[t]{5cm}
Marius T\u arn\u auceanu \\
Faculty of  Mathematics \\
``Al.I. Cuza'' University \\
Ia\c si, Romania \\
e-mail: {\tt tarnauc@uaic.ro}
\end{minipage}

\end{document}